\documentclass[12pt,a4paper]{amsart}
\usepackage[utf8]{inputenc}
\usepackage[T1]{fontenc}
\usepackage{amsmath,amsfonts,amssymb,amsthm,mathrsfs}
\usepackage{color}
\usepackage{marginnote}
\usepackage{constants}
\usepackage[hidelinks]{hyperref}
\newcommand{\be}{\begin{equation}} 
\newcommand{\ee}{\end{equation}}
\newcommand{\bea}{\begin{eqnarray}} 
\newcommand{\eea}{\end{eqnarray}}
\newcommand{\bean}{\begin{eqnarray*}} 
\newcommand{\eean}{\end{eqnarray*}}

\def\na{\nabla}

\def\mn{|\!\!|}

\def\mn2{|\!\!|_{M^{d/2}}}

\newtheorem{theorem}{Theorem}

\newtheorem{lemma}[theorem]{Lemma}

\theoremstyle{definition}

\theoremstyle{remark}
\newtheorem{remark}[theorem]{Remark}

\numberwithin{equation}{section}
\numberwithin{theorem}{section}

\newcommand{\normA}[2][]{\left\lVert #2 \right\rVert_{#1}}
\newconstantfamily{St1}{symbol=C}

\author[F. Heihoff]{Frederic Heihoff}
\address[F. Heihoff]{
Institut für Mathematik, Universität Paderborn, Warburger Str. 100, 33098 Paderborn, Germany\\
ORCID: 0000-0003-3654-0271
}
\email{fheihoff@math.uni-paderborn.de}

\author[P. Knosalla]{Piotr Knosalla*}
\address[P. Knosalla]{
Institute of Physics, University of Opole, Oleska 48, 45-052 Opole, Poland\\
ORCID: 0000-0002-3594-0938
}
\email{piotr.knosalla@uni.opole.pl}

\thanks{* Corresponding author: piotr.knosalla@uni.opole.pl}

\title[A chemotaxis-consumption model with boundary inflow]{A chemotaxis-consumption model with boundary inflow of a nutrient, steady states}

\begin{document}
\begin{abstract}
In this manuscript we consider the steady state problem for a chemotaxis-consumption model with positive inflow of a nutrient across the boundary. We show that in any bounded smooth domain there exists a unique nonconstant steady state provided that the mass of bacteria is sufficiently large. We further prove that, if the domain is a ball, this condition can be dropped and solutions exist for any positive bacterial mass.
\end{abstract}

\keywords{chemotaxis, consumption of chemoattractant, inflow of nutrient, nonconstant steady states}

\subjclass[2010]{35J25; 92C17; 35Q92}

\date{\today}
\maketitle

\baselineskip=16pt

\section{Introduction}

Let $\Omega\subset\mathbb{R}^n$ ($n\geq 1$) be a bounded domain with $C^{2,\alpha}$ boundary, $\alpha\in(0,1)$. We consider a colony of bacteria that lives in the container $\Omega$, which is otherwise filled with water containing a dissolved nutrient. The bacteria are assumed to walk randomly and follow the highest concentration of the nutrient, which diffuses and is consumed by said bacteria. To account for scenarios, in which the nutrient is abundant in the space surrounding our considered domain $\Omega$, e.g. in the common case of the nutrient being oxygen, we further presume a constant-in-time inflow of the nutrient across the boundary of the container $\partial\Omega$. By $u(x,t)$, $v(x,t)$, for a given point $x\in\bar{\Omega}$ and time $t\geq 0$, we denote the bacteria and nutrient density, respectively. Disregarding the effect of the fluid movement on the bacteria for the sake of simplicity, we arrive at the following reduced version of the model proposed by Tuval et al. \cite{Tuval} for settings of this kind,  

\begin{equation}\label{evol}
\left\{\begin{array}{lr}
u_t=\nabla\cdot(\nabla u-u\nabla v),&(x,t)\in\Omega\times(0,T),\\
\partial_\nu u-u\partial_\nu v=0,& (x,t)\in\partial\Omega\times(0,T),\\
v_t=\Delta v-uv,& (x,t)\in\Omega\times(0,T),\\
\partial_\nu v=\bar{v}>0,&(x,t)\in\partial\Omega\times(0,T),\\
u(x,0)=u_0(x)\geq 0,\,\,v(x,0)=v_0(x)\geq 0,& x\in\Omega,
\end{array}\right.
\end{equation}  
where $\bar{v}$ is a given positive constant representing the aforementioned inflow, and $\nu=\nu(x)$ is the outward normal vector to the boundary at point $x\in\partial \Omega$.

In this paper, we are interested in the existence and uniqueness of time-independent solutions of the system in \eqref{evol}, the so-called steady states or stationary solutions.   
If we denote by $(U(x),V(x)),$ $x\in\bar{\Omega},$ the steady-state solution of the above system, then the corresponding stationary problem for \eqref{evol} reads as follows: 

\begin{equation}\label{std}
\left\{\begin{array}{lr}
0=\nabla\cdot(\nabla U-U\nabla V),& x\in\Omega,\\
\partial_\nu U-U\partial_\nu V=0,& x\in\partial\Omega,\\
0=\Delta V-UV,& x\in\Omega,\\
\partial_\nu V=\bar{v}>0,& x\in\partial\Omega.\\
\end{array}\right.
\end{equation}  
A solution $(U,V)\in (C^2(\bar{\Omega}))^2$ of \eqref{std} is called positive if both its components $U,V$ are positive functions in $\bar\Omega$. 

Using the boundary condition for the first equation in \eqref{evol} we conclude that $\int_{\Omega}u(\cdot,t)=\int_\Omega u_0$ for all $t\in(0,T)$, i.e. solutions of this equation conserve the total mass of bacteria as one would expect. Therefore we consider \eqref{std} with an additional mass constraint on $U.$ Let thus $M\geq 0$ be a given and fixed total mass of bacteria, $M=\int_\Omega u_0=\int_\Omega U$. Notice that, if the pair $(U,V)\in (C^2(\bar{\Omega}))^2$ with $M>0$ is a solution of \eqref{std}, then both of its components are not constants as a direct consequence of their boundary conditions.

Testing the first equation in \eqref{std} by $Ue^{-V}$ and using the corresponding boundary condition, we get
\begin{align*}
0=\int_\Omega Ue^{-V}\nabla\cdot(e^V\nabla (Ue^{-V}))=&\int_{\partial\Omega}Ue^{-V}e^V\partial_\nu (Ue^{-V})-\int_\Omega e^{V}|\nabla(Ue^{-V})|^2\\
=&-\int_\Omega e^{V}|\nabla(Ue^{-V})|^2,
\end{align*}    
and thus we conclude that $\nabla (Ue^{-V})\equiv 0$, 
\begin{equation}\label{U:form}
U=Ce^V
\end{equation} 
for some constant $C$. 
 By integration of \eqref{U:form} we see that
$$
C=\frac{\int_\Omega U}{\int_\Omega e^V}=\frac{M}{\int_\Omega e^V},
$$   
and therefore obtain that the stationary bacteria density must have the form
\begin{equation}\label{U:gen:fo}
U=\frac{M}{\int_\Omega e^V}e^V.
\end{equation}
Using this we see that solving \eqref{std} becomes equivalent to solving the single equation, nonlocal problem
\begin{equation}\label{nonl:p}
\left\{\begin{array}{lr}
\Delta V=\frac{M}{\int_\Omega e^V}Ve^V,& x\in\Omega,\\
\partial_\nu V=\bar{v}>0,& x\in\partial\Omega.
\end{array}\right. 
\end{equation}
As a consequence of this analysis of \eqref{std} may be reduced to the study of \eqref{nonl:p}.  
The main result of this paper then reads as follows:

\begin{theorem}\label{MainR}
Suppose that $\Omega$ is a bounded domain with $\partial\Omega\in C^{2,\alpha}$. Then there exists a constant $L^\star\geq 0$ such that for $M>L^\star$ there exists a unique positive classical solution $(U,V)\in (C^{2,\alpha}(\bar{\Omega}))^2$ of \eqref{std}, which has the form
\begin{equation}\label{eqiv:rel}
(U,V)=\left(M\frac{e^V}{\int_\Omega e^V},V\right),
\end{equation}
and $V$ is a solution of \eqref{nonl:p}.	 If $\Omega=B_R(0)$, then $L^\star =0$ and the solution of \eqref{std} exists for any positive mass $M.$ This solution is radially symmetric, that is $(U(x),V(x))=(U(|x|),V(|x|))=(U(r),V(r))$ for $r\in[0,R)$, and the functions $U(r),$ $V(r)$ are both increasing and convex. 
\end{theorem}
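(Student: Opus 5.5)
Since \eqref{std} has been reduced to the nonlocal problem \eqref{nonl:p}, it suffices to solve that problem. I would write $\lambda=M/\int_\Omega e^V$ and first study, for fixed $\lambda>0$, the local problem
\begin{equation*}
\Delta V=\lambda Ve^V\ \text{in }\Omega,\qquad \partial_\nu V=\bar v\ \text{on }\partial\Omega.
\end{equation*}
The map $s\mapsto se^s$ is increasing for $s\ge -1$. Integrating the equation gives $\lambda\int_\Omega Ve^V=\bar v|\partial\Omega|>0$.

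\textbf{Existence and uniqueness for fixed $\lambda$.} I would use sub- and supersolutions. A constant $\underline V=-1$ is a subsolution: indeed $\Delta(-1)=0\ge -\lambda e^{-1}$ and $\partial_\nu(-1)=0\le\bar v$. For a supersolution I would take $\overline V=K+\bar v\,\psi$, where $\psi\in C^{2,\alpha}(\bar\Omega)$ satisfies $\partial_\nu\psi=1$, and choose $K$ large so that $\lambda\overline Ve^{\overline V}\ge \bar v\Delta\psi$. Monotone iteration, with the nonlinearity shifted by a large multiple of $V$, then yields a solution $V_\lambda\in C^{2,\alpha}$ by Schauder theory.

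The maximum principle at a minimum point shows $V_\lambda>0$. At an interior minimum $\Delta V\ge0$ forces $V\ge0$, and the boundary is excluded by Hopf together with $\partial_\nu V>0$; with a little more care this gives strict positivity. Uniqueness for fixed $\lambda$ follows by subtracting two solutions and testing with $(V_1-V_2)_+$, since $se^s$ is strictly increasing on $s>0$. A comparison argument of the same kind shows that $\lambda\mapsto V_\lambda$ is strictly decreasing and continuous, for instance via the implicit function theorem, because the linearization $\Delta-\lambda(1+V)e^V$ is invertible.

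\textbf{Matching the mass.} The problem now reduces to solving $g(\lambda):=\lambda\int_\Omega e^{V_\lambda}=M$. Uniqueness of $(U,V)$ for given $M$ would follow if $g$ is strictly monotone. I expect this to be the main obstacle, since $\lambda$ increases while $\int e^{V_\lambda}$ decreases. I would differentiate and use $W=\partial_\lambda V_\lambda$, which solves $\Delta W-\lambda(1+V)e^VW=Ve^V$ with $\partial_\nu W=0$, and try to show $g'(\lambda)>0$, perhaps using the integral identity $\lambda\int Ve^V=\bar v|\partial\Omega|$.

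Next come the limits. As $\lambda\to\infty$, $V_\lambda$ concentrates as a boundary layer with $V_\lambda\to0$ in the interior, so $g(\lambda)\to\infty$. As $\lambda\to0$, $V_\lambda\to\infty$ uniformly. The constraint $\lambda\int Ve^V=\bar v|\partial\Omega|$ together with $V\sim\log(1/\lambda)$ gives $g(\lambda)\approx \bar v|\partial\Omega|/\log(1/\lambda)\to0$, at least when $V_\lambda$ is nearly flat. If $V_\lambda$ is not nearly flat in a general domain, I would only obtain that $\liminf_{\lambda\to0}g$ is some $L^\star\ge0$. The intermediate value theorem then gives existence for every $M>L^\star:=\inf g$. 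If monotonicity is not available in general, uniqueness would come from monotonicity for large $\lambda$, via asymptotics, which again needs $M>L^\star$.

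\textbf{Ball case.} Uniqueness for fixed $\lambda$ makes $V_\lambda$ radial. The ODE $(r^{n-1}V')'=r^{n-1}\lambda Ve^V>0$ gives $V'>0$, so $V$ is increasing. Convexity comes from $V''=\lambda Ve^V-\frac{n-1}{r}V'$; here I would show $V'/r$ is at most the average of $\lambda Ve^V$ over $B_r$, and that average is at most $\lambda Ve^V(r)$ because $V$ is increasing. Then $V''\ge\lambda Ve^V/n>0$. Consequently $U=Ce^V$ is increasing and convex too.

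For $L^\star=0$, I would show $V_\lambda-V_\lambda(0)\to0$ as $\lambda\to0$. The ODE bound gives $V'(r)\le r\lambda V(R)e^{V(R)}/n$, and $\lambda V(R)e^{V(R)}$ stays comparable to $\bar v|\partial\Omega|/|\Omega|$ by the integral identity and near-flatness. So $V$ is nearly constant, and $g(\lambda)\to0$.
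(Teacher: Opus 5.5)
There is a genuine gap at the step you yourself flag as the main obstacle. Uniqueness of $(U,V)$ for a given $M$ needs $g(\lambda)=\lambda\int_\Omega e^{V_\lambda}$ to be injective, and you never prove $g'>0$. Without it you only get existence for $M>\inf g$.

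Your suggested route through the integral identity cannot close this. Differentiating $\lambda\int_\Omega V_\lambda e^{V_\lambda}=\bar v|\partial\Omega|$ and using $W<0$, $1+V_\lambda\ge 1$ only yields $g'(\lambda)\ge\int_\Omega(1-V_\lambda)e^{V_\lambda}$. That bound says nothing when $V_\lambda$ is large, i.e.\ for small $\lambda$, which is exactly the regime governing small masses in the ball. Your fallback (monotonicity for large $\lambda$ via asymptotics) is unproved. Even if true, it would give uniqueness only above some threshold, whereas the theorem asserts $L^\star=0$ in the ball with uniqueness for every $M>0$.

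The missing idea is to apply the same maximum-principle argument you use for $W<0$ to $Z:=-1-\lambda W$. A direct computation from the equation for $W$ gives
\[
\Delta Z-\lambda(1+V_\lambda)e^{V_\lambda}Z=\lambda e^{V_\lambda}>0\ \text{ in }\Omega,\qquad \partial_\nu Z=0\ \text{ on }\partial\Omega .
\]
$Z$ cannot be constant, since then $Z=-1/(1+V_\lambda)$, which is nonconstant. Hence $Z<0$, i.e.\ $1+\lambda W>0$ pointwise; equivalently, $\lambda e^{V_\lambda}$ is pointwise increasing in $\lambda$. Therefore $g'(\lambda)=\int_\Omega(1+\lambda W)e^{V_\lambda}>0$ on all of $(0,\infty)$, $L^\star=\lim_{\lambda\searrow0}g(\lambda)$ exists, and uniqueness holds for every $M>L^\star$.

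Your argument for $L^\star=0$ in the ball also needs repair. The claim $V_\lambda-V_\lambda(0)\to 0$ is false, and your justification is circular: it uses near-flatness to bound $\lambda V_\lambda(R)e^{V_\lambda(R)}$. To see it is false, recall $V_\lambda'(R)=\bar v$ for every $\lambda$. If the oscillation tended to $0$, then $V_\lambda e^{V_\lambda}$ would be nearly constant on $[0,R]$ (note $V_\lambda(0)\ge V_1(0)>0$ for $\lambda\le 1$). The ODE would then force $V_\lambda'(r)\approx \bar v r/R$, hence $V_\lambda(R)-V_\lambda(0)\approx \bar v R/2$.

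Flatness is not needed, only bounded oscillation, and you already have the ingredients:
\begin{itemize}
\item Convexity gives $0<V_\lambda'\le\bar v$, so $V_\lambda(0)\ge V_\lambda(R)-\bar vR$.
\item $V_\lambda(R)=\max V_\lambda\to\infty$ as $\lambda\searrow0$, because $\bar v|\partial\Omega|\le\lambda|\Omega|V_\lambda(R)e^{V_\lambda(R)}$.
\item Consequently
\[
g(\lambda)\le\frac{\lambda\int_\Omega V_\lambda e^{V_\lambda}}{\min_{\bar\Omega}V_\lambda}=\frac{\bar v|\partial\Omega|}{V_\lambda(0)}\longrightarrow 0 .
\]
\end{itemize}
The remaining parts of your plan are valid alternatives to the paper's arguments: sub/supersolutions in place of Schaefer's theorem, uniqueness via testing with $(V_1-V_2)_+$, and convexity via $V''\ge\lambda Ve^V/n$.
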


\subsection*{Results related to the problem}

The system \eqref{evol} with $\bar{v}=0$ was analyzed by Tao and Winkler in \cite{TWin}. The authors proved the existence of global weak solutions that converge to a unique constant steady state $\left(\frac{M}{|\Omega|},0\right)$. Tao further showed in \cite{TConsumption} that classical solutions also exist if a smallness condition on the initial nutrient concentration is fulfilled. For an overview regarding further results for \eqref{evol} in this vein (including various variations on \eqref{evol}), we refer the reader to the survey found in \cite{LankeitWinklerD}. 
 
Braukhoff and Lankeit in \cite{BrLank} obtained the existence and uniqueness of a nonconstant solution for problem \eqref{std} under an inhomogeneous Robin boundary condition on $V$, and Fuest et al. in \cite{FLM} showed the global existence of a solution and its convergence to the steady state for the parabolic-elliptic counterpart of \eqref{evol}.

The local asymptotic stability of the solutions of \eqref{std} with a nonzero Dirichlet or Robin boundary condition was established by Li and Li in \cite{LiLi}.

Lee et al. in \cite{LeeWangYang} considered the existence, uniqueness and boundary layer profile of solutions of \eqref{std} with nonzero Dirichlet boundary condition on $V$. The stability of these steady states in one space dimension was studied by Hong and Wang in \cite{HongWang}.

The parabolic-elliptic counterpart of \eqref{evol} under Robin boundary condition for signal with a general tensor-valued sensitivity function was studied by Ahn, Kang and Lee in \cite{AhnKangLee} and global solvability was shown. The same system under a nonzero Dirichlet boundary condition with a general sensitivity function was considered by Yang and Ahn in \cite{YangAhn} and the global existence of a solution as well as its convergence to the steady state was established.

Ahn and Lankeit in \cite{AhnLankeit} considered the unique solvability of \eqref{std} with logarithmic sensitivity and a positive Dirichlet boundary condition imposed on $V$.

Recently Knosalla and Lankeit in \cite{KnLa} considered the parabolic-elliptic counterpart of the system \eqref{evol} with additional logistic growth of bacteria and a non-zero Dirichlet boundary condition for the signal. They studied the global existence of solutions and their convergence to the positive steady state obtained by Knosalla and Wr{\'o}bel in \cite{KW}. The problem under nonhomogenous Robin boundary condition was studied by Knosalla in \cite{Knos3} 

It is also worth pointing out the papers by Knosalla and Nadzieja, \cite{KNadz}, and by Knosalla, \cite{KNos,Knos1,Knos2} which concern the aerotaxis problem. The model considered there is a modified version of the system \eqref{evol} under a positive Dirichlet or Neumann boundary condition imposed on $V$. The results obtained in \cite{KNadz, Knos2} are closely related to \eqref{nonl:p}.

 Let us note that if $M=0,$ then problem \eqref{nonl:p} formally reduces to
\begin{equation}\label{V:nonex}
\left\{\begin{array}{lr}
\Delta V=0,& x\in\Omega,\\
\partial_\nu V=\bar{v}>0,& x\in\partial\Omega, 
\end{array}\right.
\end{equation}
which has no solution. This is the main difference to the Dirichlet or Robin boundary conditions already considered in the literature mentioned above, where the constant $\bar{v}$ is a solution corresponding to $M=0$. This fact causes serious obstructions in  obtaining existence of solutions to \eqref{std} and \eqref{nonl:p}.\\

\section{Proof of the main result} 
The proof of Theorem \ref{MainR} is split into several lemmas. As already established in the introduction we will mainly work with the nonlocal problem seen in \eqref{nonl:p} as it is easier to reason about, and then use the relation in \eqref{U:gen:fo} to gain \eqref{eqiv:rel} and thus a solution of (\ref{std}). To further break the construction of such a solution down into yet simpler parts, we note that solving \eqref{nonl:p} is equivalent to finding a solution $V_\lambda$ of the local semilinear problem
\begin{equation}\label{loc:p}
\left\{\begin{array}{lr}
\Delta V_\lambda=\lambda V_\lambda e^{V_\lambda},& x\in\Omega,\\
\partial_\nu V_\lambda=\bar{v}>0,& x\in\partial\Omega, 
\end{array}\right.
\end{equation}
with $\lambda\in(0,\infty)$, for which we have
\begin{equation}\label{nonl:cond}
M=\lambda\int_\Omega e^{V_\lambda}=:F(\lambda).
\end{equation}

Reframing our approach through \eqref{loc:p} splits it up into two fairly distinct parts. First we show that \eqref{loc:p} has a classical solution for all $\lambda \in (0,\infty)$. We then follow this up by an analysis of the function $F$ as it essentially determines the range of bacterial masses $M$ we can construct solutions for in this manner.

\begin{remark}\label{rem:compl}
Notably solutions to \eqref{loc:p} are not uniformly bounded in $L^\infty(\Omega)$ with respect to $\lambda>0$ as opposed to solutions to the corresponding Dirichlet or Robin problems. This is a natural consequence of the fact that \eqref{V:nonex} does not admit solutions and thus $\|V_\lambda\|_{L^\infty(\Omega)}$ must blow up when $\lambda$ tends to zero (see also Lemma \ref{lamlim}). This in turn means that it is a priori unclear what the value of $\lim_{\lambda\searrow 0}F(\lambda)$ is in the general case. This is the source of one of the major challenges in this manuscript given the importance of the characteristics of $F$, which we can only fully address in the radial case where $\lim_{\lambda\searrow 0}F(\lambda) = 0$ while in the general case we will only be able to show that $0 \leq \lim_{\lambda\searrow 0}F(\lambda) < \infty$.
\end{remark}
Before we get into our arguments proper, we need to make one final preparation in the form of the following straightforward global negativity 	result based on the Strong Maximum Principle and the Hopf Lemma \cite{GiT, HanLin}:  
\begin{lemma}\label{MP:lem}
	Suppose that $\Omega\subset\mathbb{R}^n$ is a bounded domain which has the interior sphere property and $w\in C^2(\Omega)\cap C^1(\bar{\Omega})$ is nonconstant and satisfies the differential inequalities
	\begin{equation}\label{MP:ineq}
		\Delta w+\na w\cdot b+ cw\geq 0\quad \text{ in }\Omega 
	\end{equation} 
	and 
	\begin{equation}\label{MP:dird}
		\partial_{\nu}w \leq 0 \quad \text{ in }\partial\Omega 
	\end{equation}  
	with $c\in L^\infty(\Omega),$ $b\in L^\infty(\Omega;\mathbb{R}^n),$ $c\leq 0$. Then
	\begin{equation}\label{MP:neg}
		w < 0 \quad \text{ in }\bar{\Omega}.
	\end{equation}
\end{lemma}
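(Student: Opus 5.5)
We argue by contradiction and show that $w$ cannot attain a nonnegative maximum anywhere in $\bar\Omega$. Since $\bar\Omega$ is compact and $w\in C^1(\bar\Omega)$, the number $m:=\max_{\bar\Omega} w$ is attained. Suppose $m\geq 0$. The goal is to rule this out, which gives $w<m<0$ only if we also know $m<0$; but $m<0$ is exactly the claim $w<0$ in $\bar\Omega$, so the contradiction with $m\ge 0$ suffices.

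\emph{Interior maximum.} Suppose first that $m\geq 0$ is attained at some interior point. We apply the Strong Maximum Principle for the operator $L w=\Delta w+b\cdot\nabla w+cw$ with $c\leq 0$ and bounded coefficients, in the version valid for nonnegative maxima (Gilbarg--Trudinger, Thm.~3.5). It states that if $Lw\ge0$ in the connected set $\Omega$ and $w$ attains a nonnegative maximum in $\Omega$, then $w$ is constant. This contradicts the assumption that $w$ is nonconstant. Hence $w<m$ in $\Omega$, and the maximum is attained only on $\partial\Omega$.

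\emph{Boundary maximum.} Now let $x_0\in\partial\Omega$ with $w(x_0)=m\geq0$ and $w<m$ in $\Omega$. Because $\Omega$ satisfies the interior sphere condition at $x_0$, the Hopf Lemma applies to $L$ with $c\le 0$ and $w(x_0)\ge 0$. Since $w\in C^1(\bar\Omega)$, the outer normal derivative exists, and the Hopf Lemma gives $\partial_\nu w(x_0)>0$. This contradicts \eqref{MP:dird}. Therefore $m<0$, which is \eqref{MP:neg}.

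The only delicate point is that $c\in L^\infty$ is merely bounded rather than continuous, and that $b$ is likewise only bounded. The classical proofs of the strong maximum principle and of Hopf's lemma need only local boundedness of $b$ and $c$, so this causes no difficulty. They also require only $w\in C^2(\Omega)$, with $C^1$ regularity up to the boundary used only to make sense of the normal derivative. The hypothesis $c\le0$ is essential in both steps, and it is precisely why the argument must be restricted to a \emph{nonnegative} maximum.
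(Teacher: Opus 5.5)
Your proof is correct and follows the same route as the paper's: assume $w$ attains a nonnegative maximum on $\bar\Omega$, rule out an interior location with the strong maximum principle (Gilbarg--Trudinger, Thm.~3.5) because $w$ is nonconstant, then apply the Hopf lemma (GT, Lemma~3.4, with $c\le 0$ and $w(x_0)\ge 0$) at the strict boundary maximum to get $\partial_\nu w(x_0)>0$, contradicting $\partial_\nu w\le 0$. The aside in your first paragraph about ``$w<m<0$'' is muddled and can be deleted, since the contradiction with $m\ge 0$ is all you need.
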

\begin{proof}
	Assume that there exists a point in $\bar{\Omega}$ at which $w$ is nonnegative. Then $w$ must attain a nonnegative maximum in $\bar\Omega$ due to continuity. But according to the Maximum Principle seen in \cite[Lemma 3.5]{GiT}, the fact that $w$ is nonconstant, $c \leq 0$ and \eqref{MP:ineq}, this nonnegative maximum cannot occur in the interior of $\Omega$. Therefore said nonnegative maximum must instead occur at a point $x_0$ on the boundary $\partial \Omega$ and must be strict in the sense that $u(x_0) > u(x)$ for all $x \in \Omega$. But this is exactly the kind of setting where the Hopf Lemma as laid out in \cite[Lemma 3.4]{GiT} applies and gives us that then $\partial_{\nu}w(x_0) > 0$ must hold true, which contradicts \eqref{MP:dird}. Therefore our initial assumption must have been wrong and we gain our desired result in \eqref{MP:neg}. 
\end{proof}

\subsection{Construction and properties of solutions $V_\lambda$ to (\ref{loc:p})}

We now present the promised existence and uniqueness proof for solutions of \eqref{loc:p}. 

\begin{lemma}\label{Exist:loc}
For $\lambda\in(0,\infty)$ there exists a unique positive solution $V_\lambda\in C^{2,\alpha}(\bar{\Omega})$ of \eqref{loc:p}.
\end{lemma}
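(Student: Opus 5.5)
Existence will come from the method of sub- and supersolutions for the semilinear Neumann problem \eqref{loc:p} with nonlinearity $f(V)=\lambda Ve^{V}$. Uniqueness will follow from Lemma \ref{MP:lem} applied to the difference of two solutions.

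\textbf{Subsolution.} Any solution must be positive, and a positive $V$ gives $\Delta V>0$. So I take as subsolution $\underline{V}=0$, after modifying the nonlinearity below zero if needed, for instance to $\lambda\max(V,0)e^{V}$. This function satisfies $\Delta\underline{V}=0=f(0)$ and $\partial_\nu\underline{V}=0\le\bar v$, as required.

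\textbf{Supersolution.} Fix $\psi\in C^{2,\alpha}(\bar\Omega)$ solving
\[
\Delta\psi=\frac{\bar v|\partial\Omega|}{|\Omega|}\ \text{ in }\Omega,\qquad \partial_\nu\psi=\bar v\ \text{ on }\partial\Omega .
\]
This problem is solvable because the compatibility condition holds. Normalize $\psi$ so that $\psi\ge 0$, and set $\overline{V}=\psi+K$. We need
\[
\Delta\overline{V}=c_0\le\lambda(\psi+K)e^{\psi+K},\qquad c_0:=\frac{\bar v|\partial\Omega|}{|\Omega|},
\]
and this holds once $K$ is large, since $\lambda Ke^{K}\ge c_0$ for $K$ large. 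The boundary condition $\partial_\nu\overline{V}=\bar v$ holds with equality. Since $\underline V=0\le\overline V$, the monotone iteration goes through. It uses $-\Delta V+kV=kV-f(V)$ with $k\ge \max_{[0,\max\overline V]}f'$ and Neumann data $\bar v$, and relies on Schauder theory for the Neumann problem (the boundary is $C^{2,\alpha}$). This yields a solution $V_\lambda\in C^{2,\alpha}(\bar\Omega)$ with $0\le V_\lambda\le\overline V$.

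\textbf{Strict positivity.} Apply Lemma \ref{MP:lem} to $w=-V_\lambda$. We have $\Delta w=-\lambda V_\lambda e^{V_\lambda}$, which can be written as $\Delta w+c\,w=0$ with $c=-\lambda e^{V_\lambda}\le0$ bounded. Also $\partial_\nu w=-\bar v\le0$, and $w$ is nonconstant because its normal derivative is nonzero. The lemma gives $w<0$, i.e. $V_\lambda>0$ on $\bar\Omega$. The same argument shows that every solution is positive, so the truncation used in the construction is harmless.

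\textbf{Uniqueness.} Let $V_1,V_2$ be two solutions and set $w=V_1-V_2$. Then
\[
\Delta w=\lambda\bigl(g(V_1)-g(V_2)\bigr)=\lambda\, c(x)\,w,\qquad g(s)=se^{s},
\]
where $c(x)=\int_0^1 g'(tV_1+(1-t)V_2)\,dt$. Since $g'(s)=(1+s)e^{s}>0$ for $s>-1$ and both solutions are positive, $c\ge 0$. Hence $\Delta w-\lambda c\,w=0$, with coefficient $-\lambda c\le0$, and $\partial_\nu w=0$.

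If $w$ is nonconstant, Lemma \ref{MP:lem} gives $w<0$. Applying it to $-w$ gives $-w<0$, a contradiction. So $w$ is constant, and then $0=\lambda c\,w$ with $c>0$ forces $w=0$.

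\textbf{Main obstacle.} The delicate step is the construction rather than uniqueness. It requires a monotone iteration for the inhomogeneous Neumann problem with $C^{2,\alpha}$ regularity, and a supersolution that is compatible with the nonzero flux. Choosing $\overline V=\psi+K$ with $\Delta\psi$ a positive constant handles the flux exactly. The remaining work is routine: checking that the iterates stay ordered between $0$ and $\overline V$, which follows from the maximum principle for $-\Delta+k$ with Neumann data.
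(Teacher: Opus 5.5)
Your proof is correct, but it takes a genuinely different route from the paper. The paper proves existence with Schaefer's fixed point theorem. It sets $\mathcal{F}[f]=W$, where $\Delta W-\lambda W=\lambda f(e^f-1)$ and $\partial_\nu W=\bar v$, and gets compactness from $W^{2,p}$ theory. It then bounds every solution of $V=\sigma\mathcal{F}[V]$, $\sigma\in[0,1]$, from below by Lemma \ref{MP:lem}, and from above by comparison with the solution $\overline{W}_\lambda$ of $\Delta\overline{W}_\lambda=\lambda\overline{W}_\lambda$, $\partial_\nu\overline{W}_\lambda=2\bar v$. Its uniqueness proof is an energy argument: the equation for $V_\lambda-V_\lambda^*$ is tested with $V_\lambda-V_\lambda^*$, using that $e^s$ is monotone and $V_\lambda^*>0$.

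You instead use the ordered pair $0\le\psi+K$ with monotone iteration, and you prove uniqueness pointwise via the secant linearization and Lemma \ref{MP:lem} applied to $\pm w$. The fixed-point route needs only compactness and a priori bounds along the homotopy. Yours relies on a monotone-iteration theorem for inhomogeneous Neumann data, which is standard (Amann, Sattinger) but quoted rather than proved. In return it is constructive and needs just one explicit barrier.

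That barrier is also much sharper than the paper's. The paper's $\overline{W}_\lambda$ has mean value $2\bar v|\partial\Omega|/(\lambda|\Omega|)$, so it is of order $1/\lambda$, whereas yours grows only logarithmically. Take the smallest admissible $K$, namely $K_\lambda>0$ with $\lambda K_\lambda e^{K_\lambda}=c_0$. Your construction together with uniqueness then gives $V_\lambda\le\psi+K_\lambda$, and hence
\[
F(\lambda)=\lambda\int_\Omega e^{V_\lambda}\le\lambda e^{K_\lambda}\int_\Omega e^{\psi}=\frac{c_0}{K_\lambda}\int_\Omega e^{\psi}\longrightarrow 0\quad\text{as }\lambda\searrow 0,
\]
because $K_\lambda\to\infty$. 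So $L^\star=0$ in Lemma \ref{limF} on every bounded $C^{2,\alpha}$ domain, not only on balls as in Lemma \ref{V:ball:lim}. This removes the obstruction described in Remark \ref{rem:compl}.
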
 
\begin{proof}
Our general strategy here will be to frame solutions to \eqref{loc:p} as fixed points of an appropriately chosen operator $\mathcal{F}$ on $C^0(\bar\Omega)$ and then apply Schaefer's Fixed Point Theorem, \cite[Theorem 9.2.2.4]{Evans}, which is a classic consequence of the well-known Leray-Schauder Theorem. To define the aforementioned operator, we introduce for each $f\in C^0(\bar{\Omega})$ the related linear problem 
\begin{equation}\label{loc:lin:aux}
\left\{\begin{array}{lr}
\Delta  W_{\lambda}-\lambda  W_{\lambda} = \lambda f( e^{f}-1),& x\in\Omega,\\
\partial_\nu  W_{\lambda}=\bar{v}>0,& x\in\partial\Omega.
\end{array}\right.
\end{equation}  
According to \cite[Theorem 2.4.2.7]{Grisvard}, there exists a unique solution $ W_{\lambda} \in \bigcap_{p\in(1,\infty)} W^{2,p}(\Omega) \subseteq C^0(\bar\Omega)$ of \eqref{loc:lin:aux} for each $f\in C^0(\bar\Omega)$ and we can thus define our desired operator $\mathcal{F} : C^0(\bar\Omega) \rightarrow C^0(\bar\Omega)$ by setting $\mathcal{F}[f] := W_{\lambda}$. It is then easy to see that any fixed points of this operator are solutions to (\ref{loc:p}).

As another consequence of \cite[Theorem 2.4.2.7]{Grisvard}, we can further for each $p\in(1,\infty)$ find $C = C(\Omega,p) > 0$ such that
\begin{equation*}
\begin{aligned}
\normA[W^{2,p}(\Omega)]{\mathcal{F}[f]} = \normA[W^{2,p}(\Omega)]{W_{\lambda}}&\leq  C\left(\normA[L^{p}(\Omega)]{\lambda f(e^f-1)}+\normA[W^{1-1/p,p}(\partial\Omega)]{\bar{v}}\right)\\
&\leq C\lambda(|\Omega|^\frac{1}{p} + |\partial\Omega|^\frac{1}{p})\left(\normA[L^{\infty}(\Omega)]{\lambda f(e^f-1)}+\bar{v}\right)\\
&\leq C\lambda(|\Omega|^\frac{1}{p} + |\partial\Omega|^\frac{1}{p})\left(\lambda e^{2\normA[L^{\infty}(\Omega)]{f}}+\bar{v}\right)
\end{aligned}  
\end{equation*}
for all $f \in C^0(\bar\Omega)$ and similarly that
\begin{equation*}
\begin{aligned}
\normA[W^{2,p}(\Omega)]{\mathcal{F}[f]-\mathcal{F}[g]}&\leq C\lambda\normA[L^{\infty}(\Omega)]{ f(e^{f}-1)-g(e^g-1)} \\
&\leq C\lambda e^{2\max\{\normA[L^{\infty}(\Omega)]{f}, \normA[L^{\infty}(\Omega)]{g}\}} \normA[L^{\infty}(\Omega)]{ f - g } 
\end{aligned}
\end{equation*}
for all $f,g \in C^0(\bar\Omega)$ by also employing the mean value theorem as well as a straightforward pointwise estimate. As for e.g.\ $p = n+1$ the space $W^{2,p}(\Omega)$ embeds continuously into $C^{1,\beta}(\bar\Omega)$ for some $\beta \in (0,1)$ due to the Sobolev inequality, \cite[Section 5.6.3]{Evans}, this is sufficient to ensure that $\mathcal{F}$ is both continuous and compact (by e.g.\ the Arzelà-Ascoli Theorem).

As the final prerequisite for Schaefer's Fixed Point Theorem, we now need to only show that the set 
$$
   \mathcal{M} := \left\{  V_{\lambda} \in C^0(\bar\Omega) \;|\; V_{\lambda} = \sigma\mathcal{F}[  V_{\lambda} ] \text{ for some } \sigma \in [0,1] \right\}
$$
is bounded in $C^0(\bar{\Omega})$. 

To this end, we will make use of the fact that all elements of $\mathcal{M}$ are  $W^{2,p}(\Omega)$-solutions to the related problem
\begin{equation}\label{loc:nlin:aux}
  \left\{\begin{array}{lr}
  \Delta V_{\lambda}-\lambda V_{\lambda}=\sigma\lambda  V_{\lambda} (e^{V_{\lambda}}-1),& x\in\Omega,\\
  \partial_\nu V_{\lambda}=\sigma\bar{v}>0,& x\in\partial\Omega
  \end{array}\right.
\end{equation}
for some $\sigma \in [0,1]$. Notice that for any $V_\lambda\in\mathcal{M}$ we have $V_\lambda \in \bigcap_{p\in(1,\infty)}W^{2,p}(\Omega) \subseteq \bigcap_{\beta \in (0,1)} C^{1,\beta}(\bar\Omega)$ due to the Sobolev inequality, \cite[Section 5.6.3]{Evans}. As this entails $\sigma\lambda V_{\lambda}(e^{V_{\lambda}}-1)\in C^{\alpha}(\bar\Omega)$, it thus follows by standard Schauder estimates, \cite{GNa}, that we have $V_{\lambda}\in C^{2,\alpha}(\bar{\Omega})$. It then immediately follows that $V_{\lambda}=0$ for $\sigma=0,$ and $V_{\lambda}$ is positive for all $\sigma \in (0,1]$ as a consequence of Lemma \ref{MP:lem}   since $-V_{\lambda}$ is a nonconstant solution of
$$
\Delta (- V_{\lambda}) + c (- V_{\lambda})=0\quad\text{ in }\Omega \quad\text{ and }\quad \partial_\nu (- V_{\lambda}) < 0 \text{ on }\partial\Omega
$$
with $c(x):=(\sigma -1)\lambda -\sigma\lambda e^{ V_{\lambda}(x)}<0$. To establish a corresponding upper bound, we let $\overline{W}_\lambda\in C^{2,\alpha}(\bar{\Omega})$ be the unique solution of  
\begin{equation}\label{supsol}
  \left\{\begin{array}{lr}
  \Delta \overline{W}_{\lambda}-\lambda \overline{W}_{\lambda}= 0,& x\in\Omega,\\
  \partial_\nu \overline{W}_{\lambda}=2\bar{v},& x\in\partial\Omega
  \end{array}\right.
\end{equation}
according to \cite[Theorem 2.4.2.7]{Grisvard} and \cite{GNa}.
Since $V_{\lambda}> 0$ in $\bar{\Omega}$ for $\sigma\in(0,1]$, the difference $V_{\lambda}-\overline{W}_{\lambda}$ satisfies both
$$
\Delta ( V_{\lambda}-\overline{W}_{\lambda}) -\lambda ( V_{\lambda}-\overline{W}_{\lambda})=\sigma\lambda  V_{\lambda}(e^{V_{\lambda}}-1)>0 \quad \text{ in }\Omega
$$
and 
$$
	\partial_\nu (V_{\lambda}-\overline{W}_{\lambda})|_{\partial\Omega} = (\sigma - 2)\overline{v} < 0\quad \text{ on } \partial \Omega.
$$
This lets us again apply Lemma \ref{MP:lem} to gain $V_{\lambda} < \overline{W}_\lambda$ in $\bar\Omega$. Thus it follows that $$
  \normA[L^\infty(\Omega)]{ V_{\lambda}} \leq \normA[L^\infty(\Omega)]{\overline{W}_{\lambda}}
$$ for all $V_{\lambda} \in \mathcal{M}$, which means that $\mathcal{M}$ is in fact bounded in $C^0(\bar{\Omega})$.

Therefore we can now apply \cite[Theorem 9.2.2.4]{Evans} to find $V_\lambda \in C^0(\bar\Omega)$ such that 
$$
  \mathcal{F}[ V_\lambda ] = V_\lambda,
$$
 and in fact $V_\lambda\in C^{2,\alpha}(\bar{\Omega}).$

Finally, we will now show that the solution found above must be unique. To do this, we assume that the problem in \eqref{loc:p} has two distinct solutions $V_{\lambda}$, $V_{\lambda}^*$. Then their difference $\tilde{V_\lambda}:= V_{\lambda}-V_{\lambda}^*$ satisfies 
\begin{equation}\label{dif:un:loc}
\left\{\begin{array}{lr}
\Delta \tilde{V_\lambda}-\lambda\tilde{V_\lambda}e^{V_{\lambda}}=\lambda V_{\lambda}^*(e^{V_{\lambda}}-e^{V_{\lambda}^*}),& x\in\Omega,\\
\partial_\nu \tilde{V_\lambda}=0,& x\in\partial\Omega. 
\end{array}\right.
\end{equation}   
Testing the first equation in \eqref{dif:un:loc} by $\tilde{V_\lambda}$ leads us to,
$$
-\int_\Omega|\nabla\tilde{V_\lambda} |^2-\lambda\int_\Omega \tilde{V_\lambda}^2 e^{V_{\lambda}}=\lambda\int_\Omega V_{\lambda}^*(e^{V_{\lambda}}-e^{V_{\lambda}^*}) \tilde{V_\lambda} = \lambda\int_\Omega V_{\lambda}^*(e^{V_{\lambda}}-e^{V_{\lambda}^*}) (V_{\lambda}-V_{\lambda}^*).
$$ 
Since the function $e^x$ is monotonically increasing and $V_{\lambda}^*>0$, we see that the right-hand side of the above equality is nonnegative. As a consequence we get that $\tilde{V_\lambda}\equiv 0,$ a contradiction.

\end{proof}

To further our understanding of the solutions $V_\lambda$ as well as better illustrate the point made in Remark \ref{rem:compl}, we now investigate their behavior in the two limit cases for the parameter $\lambda$.

\begin{lemma}\label{lamlim}
For the family of solutions $(V_\lambda)_{\lambda > 0}$ to \eqref{loc:p} constructed in Lemma \ref{Exist:loc}, we have 
\begin{equation}\label{pr1}
\lim_{\lambda\searrow 0}\normA[L^\infty(\Omega)]{V_\lambda}=+\infty,
\end{equation}
and
\begin{equation}\label{pr2}
\lim_{\lambda\rightarrow \infty}\normA[L^p(\Omega)]{V_\lambda}=0.
\end{equation}
for all $p \in [1,\infty]$ if $n = 1$ and $p \in [1,\infty)$ if $n \geq 2$.
\end{lemma}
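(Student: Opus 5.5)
For \eqref{pr1} I would integrate the equation in \eqref{loc:p} over $\Omega$. The divergence theorem gives the basic identity
\begin{equation*}
\bar v|\partial\Omega| = \int_{\partial\Omega}\partial_\nu V_\lambda = \int_\Omega \Delta V_\lambda = \lambda\int_\Omega V_\lambda e^{V_\lambda}.
\end{equation*}
Since $V_\lambda>0$ by Lemma \ref{Exist:loc}, this yields
\begin{equation*}
\bar v|\partial\Omega| \le \lambda|\Omega|\,\normA[L^\infty(\Omega)]{V_\lambda}e^{\normA[L^\infty(\Omega)]{V_\lambda}}.
\end{equation*}
Hence $\normA[L^\infty(\Omega)]{V_\lambda}e^{\normA[L^\infty(\Omega)]{V_\lambda}} \ge \bar v|\partial\Omega|/(\lambda|\Omega|)\to\infty$ as $\lambda\searrow 0$, and because $s\mapsto se^s$ is increasing, \eqref{pr1} follows.

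For \eqref{pr2} I would first get an $L^1$ bound from the same identity. Since $e^{V_\lambda}\ge 1$,
\begin{equation*}
\normA[L^1(\Omega)]{V_\lambda} \le \int_\Omega V_\lambda e^{V_\lambda} = \frac{\bar v|\partial\Omega|}{\lambda}\to 0 \quad\text{as } \lambda\to\infty.
\end{equation*}
To pass to higher $p$ I would use a $\lambda$-uniform bound in some space that embeds compactly or interpolates well, namely $W^{1,q}$. Testing \eqref{loc:p} with $V_\lambda$ gives
\begin{equation*}
\int_\Omega|\nabla V_\lambda|^2+\lambda\int_\Omega V_\lambda^2e^{V_\lambda} = \bar v\int_{\partial\Omega}V_\lambda .
\end{equation*}
I would control the boundary term by the trace inequality, $\int_{\partial\Omega}V_\lambda \le C(\normA[L^1(\Omega)]{\nabla V_\lambda}+\normA[L^1(\Omega)]{V_\lambda})$. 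Then Young's inequality absorbs the gradient part, giving $\int_\Omega|\nabla V_\lambda|^2 \le C + C\normA[L^1(\Omega)]{V_\lambda} \le C$ for $\lambda\ge 1$. Together with the $L^1$ bound, $V_\lambda$ is bounded in $W^{1,2}(\Omega)$ uniformly for $\lambda\ge1$.

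The Gagliardo--Nirenberg interpolation between $L^1$ and $W^{1,2}$ then gives $\normA[L^p(\Omega)]{V_\lambda}\le C\normA[W^{1,2}(\Omega)]{V_\lambda}^{\theta}\normA[L^1(\Omega)]{V_\lambda}^{1-\theta}\to0$ for $p<2n/(n-2)$, and for every finite $p$ when $n\le 2$. For $n=1$ the embedding into $C^0$ also covers $p=\infty$. This is the step I expect to be the main obstacle: for $n\ge3$ the $W^{1,2}$ bound only reaches $p<2^*$, so the full range $p<\infty$ needs a bound in $W^{1,q}$ for every $q<n/(n-1)$, or a bootstrap.

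I would obtain this by treating $\Delta V_\lambda = g_\lambda$ with $g_\lambda=\lambda V_\lambda e^{V_\lambda}$. The key fact is that $\normA[L^1(\Omega)]{g_\lambda}=\bar v|\partial\Omega|$ is bounded independently of $\lambda$, while the boundary data is fixed. Stampacchia/Brezis--Strauss type $L^1$ estimates for the Neumann problem then bound $V_\lambda - \frac{1}{|\Omega|}\int_\Omega V_\lambda$ in $W^{1,q}(\Omega)$ for all $q<n/(n-1)$, uniformly in $\lambda$; the mean is controlled by the $L^1$ bound. The problem with this route is that $W^{1,q}\hookrightarrow L^{p}$ only for $p<n/(n-2)$, which still does not reach every finite $p$. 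So I would instead iterate: test with $V_\lambda^{k}$ and use the trace inequality to show that $\lambda\int_\Omega V_\lambda^{k+1}e^{V_\lambda}\le \bar v\int_{\partial\Omega}V_\lambda^k$. Combining this with $\int_\Omega|\nabla V_\lambda^{(k+1)/2}|^2$, an induction should bound $\int_\Omega V_\lambda^{k+1}$ by $C_k/\lambda$ times lower-order quantities, giving decay in every $L^p$ with $p<\infty$.
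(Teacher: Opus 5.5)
Your proposal is correct and in substance follows the paper. Both arguments obtain \eqref{pr1} and the $L^1$ decay by integrating \eqref{loc:p}, and the case $n=1$, $p=\infty$ from a $\lambda$-uniform $W^{1,2}$ bound plus Gagliardo--Nirenberg. For general finite $p$, both test with powers of $V_\lambda$, absorb the gradient part of a trace estimate for $\bar v\int_{\partial\Omega}V_\lambda^{k}$ into $\int_\Omega|\nabla V_\lambda^{(k+1)/2}|^2$, and induct from the $L^1$ bound; the paper does this with $\int_{\partial\Omega}\varphi^2\le C_1\int_\Omega|\nabla\varphi|^2+C_1(\int_\Omega\varphi)^2$ for $\varphi=V_\lambda^{p/2}$, doubling $p$ at each step, so the iteration works in every dimension and your $W^{1,2}$/Gagliardo--Nirenberg route for $n\le 2$ and the Stampacchia detour are only needed for $p=\infty$ when $n=1$.
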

\begin{proof} 
By integration of (\ref{loc:p}), we gain 
$$
   \bar{v}|\partial \Omega| = \lambda\int_\Omega V_{\lambda}e^{V_{\lambda}} \leq \lambda \normA[L^\infty(\Omega)]{V} e^{\normA[L^\infty(\Omega)]{V}}|\Omega|
$$
and thus 
$$
  \frac{1}{\lambda} \leq \frac{|\Omega|}{\bar v|\partial \Omega|} \normA[L^\infty(\Omega)]{V} e^{\normA[L^\infty(\Omega)]{V}}
$$
for all $\lambda > 0$. As the function $x \mapsto xe^x$ is monotonically increasing on $[0,\infty)$, this yields (\ref{pr1}).

Similarly integration of (\ref{loc:p}) also gives us 
$$
   \bar{v}|\partial \Omega| = \lambda\int_\Omega V_{\lambda}e^{V_{\lambda}} \geq \lambda \int_\Omega V_\lambda
$$ and thus 
\begin{equation}\label{asympt1}
  \int_\Omega V_\lambda\leq\frac{\bar{v}|\partial\Omega|}{\lambda}
\end{equation}
for all $\lambda > 0$. Using standard continuous trace embeddings, \cite[Theorem 5.5.1]{Evans}, combined with the Poincaré inequality, we can now fix $C_1 > 0$ such that 
\begin{equation}\label{eq:trace}
  \int_{\partial\Omega} \varphi^2 \leq C_1 \int_\Omega |\nabla \varphi|^2 + C_1\left(\int_\Omega \varphi \right)^2
\end{equation}
for all $\varphi \in W^{1,2}(\Omega)$. Testing the equation in (\ref{loc:p}) with $V^{p-1}_\lambda$, $p \in [2,\infty)$, we further gain
$$
  \bar{v}\int_{\partial \Omega} V_\lambda^{p-1} - \frac{4(p-1)}{p^2}\int_\Omega |\nabla V^\frac{p}{2}_\lambda|^2  = \lambda \int_\Omega V_\lambda^p e^{V_\lambda} \geq\lambda \int_\Omega V_\lambda^p
$$
for all $\lambda > 0$ by way of partial integration. Due to (\ref{eq:trace}) and Young's inequality, we can then see that  
$$
  \begin{aligned} 
  \bar{v}\int_{\partial \Omega} V^{p-1}_\lambda &\leq C_2(p) + \frac{2(p-1)}{p^2 C_1}\int_{\partial \Omega} V^p_\lambda  \\
  &\leq C_2(p) + \frac{2(p-1)}{p^2}\int_\Omega |\nabla V^\frac{p}{2}_\lambda|^2 + \frac{2(p-1)}{p^2} \left( \int_\Omega V^\frac{p}{2}_\lambda \right)^2
  \end{aligned}
$$
with $C_2(p) := \bar{v}^p|\partial\Omega|\left(\frac{p^2 C_1}{2(p-1)}\right)^{p-1}$, which combined with (\ref{asympt1}) lets us improve the above inequality to
\begin{equation}\label{asympt2}
\lambda \int_\Omega V^p_\lambda + \frac{2(p-1)}{p^2}\int_\Omega |\nabla V^\frac{p}{2}_\lambda|^2 \leq C_2(p) + \frac{2(p-1)}{p^2} \left( \int_\Omega V^\frac{p}{2}_\lambda \right)^2 
\end{equation}
for all $\lambda > 0$. Thus we gain the recursive inequality
\begin{equation*}
\normA[L^p(\Omega)]{V_\lambda} \leq \frac{C_3(p)}{\sqrt[p]{\lambda}} \left[ 1 + \normA[L^\frac{p}{2}(\Omega)]{V_\lambda} \right]
\end{equation*}
with $C_3(p) := \max\left(\sqrt[p]{C_2(p)}, \sqrt[p]{\frac{2(p-1)}{p^2}}\right)$ for all $p\in[2,\infty)$ and $\lambda > 0$. By now applying this recursive inequality iteratively along the sequence of exponents $p_n := 2^n$, $n \in \mathbb{N}_0$, and using the bound in (\ref{asympt1}) as the starting point, the property in (\ref{pr2}) immediately follows for all $p\in(1,\infty)$ and $n \geq 1$. As a further consequence of (\ref{asympt1}) and (\ref{asympt2}) with $p = 2$, we can find $C_4 > 0$ such that 
$$
  \normA[W^{1,2}(\Omega)]{V_\lambda} \leq C_4
$$
for all $\lambda \geq 1$. If $n = 1$, we can thus use the Gagliardo--Nireberg inequality to find $C_5 > 0$ such that  
$$
  \normA[L^\infty(\Omega)]{V_\lambda} \leq C_5 \normA[W^{1,2}(\Omega)]{V_\lambda}^\frac{2}{3}\normA[L^1(\Omega)]{V_\lambda}^\frac{1}{3}
$$
for all $\lambda \geq 1$, which readily implies (\ref{pr2}) for $p = \infty$  if $n = 1$ given that we have already shown that (\ref{pr2}) holds for $p =1 $ and thus completes the proof.
\end{proof}
\begin{remark}
  In one dimension, the property in \eqref{pr2} with $p = \infty$ can also be derived by using the fact that the solution $\overline{W}_\lambda$ to \eqref{supsol} from the proof of Lemma \ref{Exist:loc} is a supersolution for $V_\lambda$ and has an explicit form. In fact after normalizing the domain $\Omega$ to $(-R,R)$ by translation, which leaves the relevant differential equations unaffected, it is then easy to see that
  $$
    \overline{W}_\lambda(x) = 2\bar{v}\frac{e^{\sqrt{\lambda}x} + e^{-\sqrt{\lambda}x}}{\sqrt{\lambda}(e^{\sqrt{\lambda}R} - e^{-\sqrt{\lambda}R})} \quad \text{ for } x \in (-R, R)
  $$
  and thus $\lim_{\lambda\rightarrow \infty}\normA[L^\infty(\Omega)]{V_\lambda} \leq \lim_{\lambda\rightarrow \infty}\normA[L^\infty(\Omega)]{\overline{W}_\lambda} = \lim_{\lambda\rightarrow \infty}\overline{W}_\lambda(R) = 0$.
\end{remark}

\subsection{Properties of the function $F$}

Since we have at this point established the existence and uniqueness of solutions to \eqref{loc:p} for any $\lambda\in(0,\infty)$ as well as some of their key properties, we now shift our focus to the promised analysis of the function $F(\lambda) = \lambda \int_\Omega e^{V_\lambda}$ introduced in \eqref{nonl:cond}. As a first step in this endeavor, we show the differentiable dependence of $V_\lambda$ with respect to $\lambda$ with the help of the Implicit Function Theorem and obtain that its derivative $\frac{d}{d \lambda}V_\lambda$ is the solution of the linear elliptic problem in \eqref{V:lam:der}, which allows us to again make use of Lemma \ref{MP:lem} to gain rather useful positivity and negativity properties for some key terms involving $\frac{d}{d \lambda}V_\lambda$.

\begin{lemma}\label{V:dif:lem}
We have that $\frac{d}{d\lambda}V_{\lambda}\in C((0,\infty); C^{2,\alpha}(\bar{\Omega}))$ for the solutions $V_\lambda$ constructed in Lemma \ref{Exist:loc}. Further for any $\lambda\in(0,\infty)$ the function $\frac{d}{d\lambda}V_{\lambda}$ is a solution of the linear problem
\begin{equation}\label{V:lam:der}
\left\{\begin{array}{lr}
\Delta \left(\frac{d}{d\lambda}V_{\lambda}\right)-\lambda(1+V_\lambda)e^{V_\lambda}\left(\frac{d}{d\lambda}V_{\lambda}\right)=e^{V_\lambda}V_\lambda,& x\in\Omega,\\
\partial_\nu \left(\frac{d}{d\lambda}V_{\lambda}\right)=0,& x\in\partial\Omega. 
\end{array}\right.
\end{equation}
\end{lemma}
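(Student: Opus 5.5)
We apply the Implicit Function Theorem in Banach spaces to the map
$$
\Phi:(0,\infty)\times X\to Y,\qquad \Phi(\lambda,V):=\left(\Delta V-\lambda Ve^{V},\ \partial_\nu V-\bar v\right),
$$
with $X:=C^{2,\alpha}(\bar\Omega)$ and $Y:=C^{\alpha}(\bar\Omega)\times C^{1,\alpha}(\partial\Omega)$. Since $V\mapsto Ve^V$ is smooth as a Nemytskii operator on $C^{2,\alpha}(\bar\Omega)$, and in particular into $C^\alpha(\bar\Omega)$, with derivative $h\mapsto(1+V)e^Vh$, the map $\Phi$ is $C^1$ (indeed smooth) jointly in $(\lambda,V)$. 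Moreover $\Phi(\lambda,V_\lambda)=0$ for the solution from Lemma \ref{Exist:loc}. The partial derivatives are
$$
D_V\Phi(\lambda,V_\lambda)h=\left(\Delta h-\lambda(1+V_\lambda)e^{V_\lambda}h,\ \partial_\nu h\right),\qquad \partial_\lambda\Phi(\lambda,V_\lambda)=\left(-V_\lambda e^{V_\lambda},0\right).
$$

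The key step is to show that $D_V\Phi(\lambda,V_\lambda):X\to Y$ is an isomorphism. The coefficient $c:=\lambda(1+V_\lambda)e^{V_\lambda}$ lies in $C^{\alpha}(\bar\Omega)$ and satisfies $c\geq\lambda>0$, since $V_\lambda>0$ by Lemma \ref{Exist:loc}.

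For injectivity, take a solution of $\Delta h-ch=0$ with $\partial_\nu h=0$ and test with $h$. This gives $\int_\Omega|\nabla h|^2+\int_\Omega ch^2=0$, hence $h\equiv0$. Alternatively one can apply Lemma \ref{MP:lem} to $\pm h$, which excludes nonconstant $h$, and the equation excludes nonzero constants.

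For surjectivity, standard Schauder theory for the oblique/Neumann problem with a positive zeroth-order coefficient gives a unique $C^{2,\alpha}(\bar\Omega)$ solution for any data in $Y$ (cf. \cite{GNa}, \cite[Theorem 2.4.2.7]{Grisvard} together with Schauder regularity), with a bounded inverse. Alternatively, Fredholm alternative combined with injectivity yields the same conclusion. This is the step I expect to require the most care: the solvability of the Neumann problem must be stated in exactly the Hölder spaces used, and $\partial\Omega\in C^{2,\alpha}$ is what makes $C^{1,\alpha}(\partial\Omega)$ boundary data admissible.

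The Implicit Function Theorem then provides, near each $\lambda_0$, a $C^1$ curve $\lambda\mapsto\tilde V_\lambda\in X$ with $\Phi(\lambda,\tilde V_\lambda)=0$. By the uniqueness part of Lemma \ref{Exist:loc}, $\tilde V_\lambda=V_\lambda$, so $\lambda\mapsto V_\lambda$ is $C^1$ into $C^{2,\alpha}(\bar\Omega)$ on all of $(0,\infty)$. In particular $\frac{d}{d\lambda}V_\lambda\in C((0,\infty);C^{2,\alpha}(\bar\Omega))$.

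Finally, differentiating $\Phi(\lambda,V_\lambda)=0$ in $\lambda$ gives
$$
D_V\Phi(\lambda,V_\lambda)\left[\frac{d}{d\lambda}V_\lambda\right]+\partial_\lambda\Phi(\lambda,V_\lambda)=0.
$$
Substituting the expressions for the two partial derivatives, this is exactly \eqref{V:lam:der}: the first component gives the equation in $\Omega$ with right-hand side $e^{V_\lambda}V_\lambda$, and the second gives the homogeneous Neumann condition.
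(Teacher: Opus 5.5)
Your proof is correct and follows essentially the paper's route. Both use the Implicit Function Theorem in H\"older spaces. Invertibility of the linearization comes from the zeroth-order coefficient $\lambda(1+V_\lambda)e^{V_\lambda}\geq\lambda>0$, which corresponds to the paper's condition $H+\Phi>-1$, and \eqref{V:lam:der} follows by differentiating along the solution branch.

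The only technical difference is in how the boundary condition is handled. The paper first subtracts a fixed function with Neumann data $\bar v$, so it can work with $\mathcal{T}:(0,\infty)\times C^{2,\alpha}_\nu(\bar\Omega)\to C^{\alpha}(\bar\Omega)$ and needs only the homogeneous Neumann problem. You keep $\partial_\nu V-\bar v$ as a second component in $C^{1,\alpha}(\partial\Omega)$, which requires the inhomogeneous Neumann theory; your Fredholm remark covers this. You also make explicit, via uniqueness, that the implicit-function branch is $V_\lambda$, which the paper leaves implicit.
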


\begin{proof}
We begin by fixing a solution $\Phi \in C^{2,\alpha}(\bar{\Omega})$ of the problem
$$
\left\{\begin{array}{lr}
\Delta \Phi=\frac{|\partial\Omega|\bar{v}}{|\Omega|},& x\in\Omega,\\
\partial_\nu \Phi=\bar{v}>0,& x\in\partial\Omega, 
\end{array}\right.
$$
which is unique up to an additive constant, \cite{GNa}, and then use it to introduce the auxiliary problem 
\begin{equation}\label{nonl:loc:aux}
\left\{\begin{array}{lr}
\Delta H_\lambda= \lambda(H_\lambda+\Phi) e^{H_\lambda + \Phi}-\frac{|\partial\Omega|\bar{v}}{|\Omega|},& x\in\Omega,\\
\partial_\nu H_\lambda=0,& x\in\partial\Omega.
\end{array}\right.
\end{equation}
It is then easy to see that finding a solution to \eqref{loc:p} is equivalent to finding a solution to \eqref{nonl:loc:aux} using the relation $H_\lambda:=V_{\lambda}-\Phi$. Thus Lemma \ref{Exist:loc} ensures existence of unique solutions to the problem shown in (\ref{nonl:loc:aux}). We employ this transformation here because (\ref{nonl:loc:aux}) is more accessible to the following argument based on the Implicit Function Theorem to establish differentiable dependence of $H_\lambda$, and thus $V_\lambda$, on the parameter $\lambda$ due to its adjusted boundary condition.

We now consider the nonlinear operator $\mathcal{T}:(0,\infty)\times C^{2,\alpha}_\nu(\bar{\Omega})\rightarrow C^{\alpha}(\bar{\Omega})$ given by
$$
\mathcal{T}(\lambda, H):=\Delta H-\lambda (H+\Phi) e^{H+\Phi}+\frac{|\partial\Omega|\bar{v}}{|\Omega|},
$$
where $C_\nu^{2,\alpha}(\bar{\Omega})=\{f\in C^{2,\alpha}(\bar{\Omega}):\partial_\nu f|_{\partial\Omega}=0\}$. Using this operator, solutions to (\ref{nonl:loc:aux}) can be equivalently characterized as all $H_\lambda \in C_\nu^{2,\alpha}(\bar{\Omega})$ such that
\begin{equation}\label{op:eq:aux}
\mathcal{T}(\lambda, H_\lambda)=0
\end{equation}
holds. It is clear, by direct calculation, that $\mathcal{T}\in C^1((0,\infty)\times C^{2,\alpha}_\nu(\bar{\Omega});C^{\alpha}(\bar{\Omega}))$ and its partial Fr{\'e}chet derivatives are given by 
$$
D_H\mathcal{T}(\lambda, H)\varphi=\Delta\varphi-\lambda(1+H+\Phi)e^{H+\Phi}\varphi
$$
for $\varphi\in C^{2,\alpha}_\nu(\bar{\Omega})$ and
$$
D_\lambda\mathcal{T}(\lambda, H)=-(H+\Phi) e^{H+\Phi}.
$$
As a consequence of the same standard existence, \cite[Theorem 2.4.2.7]{Grisvard}, and Schauder theory, \cite{GNa}, as used in the proof of Lemma \ref{Exist:loc}, we can easily see that $D_H\mathcal{T}(\lambda, H)$ is invertible as long as $H + \Phi > -1$ on $\bar\Omega$. As $H_\lambda + \Phi = V_\lambda > 0$ by Lemma \ref{Exist:loc} for any solution $H_\lambda$ to (\ref{nonl:loc:aux}) with $\lambda>0$ and such a solution can be exactly characterized by (\ref{op:eq:aux}), this means, that for equation (\ref{op:eq:aux}) we can indeed apply the Implicit Function Theorem, \cite[Thm.\ 3.2.1]{AmAr}, at each point $(\lambda,H_\lambda)$ to gain our desired differentiable dependence of solution $H_\lambda$ on the parameter $\lambda$. Again due to the fact that $V_\lambda = H_\lambda + \Phi$, this then readily implies $\frac{d}{d\lambda}V_{\lambda}\in C((0,\infty); C^{2,\alpha}(\bar{\Omega}))$. It then follows that $\frac{d}{d\lambda}V_{\lambda}$ further solves (\ref{V:lam:der}) by straightforward direct computation or as another consequence of the Implicit Function Theorem.
\end{proof}

\begin{lemma}\label{V:lem}
For any $\lambda\in(0,\infty)$ we have
\begin{equation}\label{V:pr:1}
 \frac{d}{d\lambda}V_{\lambda}<0\quad \text{ in }  \bar{\Omega},
\end{equation}
and
\begin{equation}\label{V:pr:2}
1+\lambda \left(\frac{d}{d\lambda}V_{\lambda}\right)>0\quad \text{ in }  \bar{\Omega}
\end{equation} 
for the functions $V_\lambda$ constructed in Lemma \ref{Exist:loc}.
\end{lemma}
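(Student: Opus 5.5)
The plan is to apply Lemma \ref{MP:lem} twice, each time to a suitable function built from $W_\lambda:=\frac{d}{d\lambda}V_\lambda$. By Lemma \ref{V:dif:lem}, $W_\lambda\in C^{2,\alpha}(\bar\Omega)$ and it solves \eqref{V:lam:der}. By Lemma \ref{Exist:loc}, $V_\lambda>0$. Since $\Omega$ has a $C^{2,\alpha}$ boundary, it has the interior sphere property. Throughout I set
$$
c(x):=-\lambda(1+V_\lambda(x))e^{V_\lambda(x)}.
$$
This $c$ is continuous on $\bar\Omega$, hence lies in $L^\infty(\Omega)$, and $c<0$. I take $b\equiv 0$ in both applications.

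For \eqref{V:pr:1} I take $w:=W_\lambda$. By \eqref{V:lam:der},
$$
\Delta w+cw=e^{V_\lambda}V_\lambda>0 \quad\text{in }\Omega \qquad\text{and}\qquad \partial_\nu w=0\le 0 \quad\text{on }\partial\Omega.
$$
It remains to check that $w$ is nonconstant. Suppose instead $W_\lambda\equiv K$ for some constant $K$. Then \eqref{V:lam:der} reduces to $-\lambda K(1+V_\lambda)=V_\lambda$ in $\Omega$. Solving for $V_\lambda$, either $V_\lambda$ is constant, or (when $\lambda K=-1$) the identity reads $1=0$, which is impossible. A constant $V_\lambda$ contradicts $\partial_\nu V_\lambda=\bar v>0$. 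Hence Lemma \ref{MP:lem} gives $W_\lambda<0$ in $\bar\Omega$.

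For \eqref{V:pr:2} I set $Z:=1+\lambda W_\lambda$ and compute its equation. Substituting $\lambda W_\lambda=Z-1$ into $\lambda$ times \eqref{V:lam:der} gives
$$
\Delta Z=\lambda(1+V_\lambda)e^{V_\lambda}(Z-1)+\lambda e^{V_\lambda}V_\lambda=\lambda(1+V_\lambda)e^{V_\lambda}Z-\lambda e^{V_\lambda}.
$$
Now put $w:=-Z$. Then
$$
\Delta w+cw=\lambda e^{V_\lambda}>0 \quad\text{in }\Omega \qquad\text{and}\qquad \partial_\nu w=-\lambda\,\partial_\nu W_\lambda=0 \quad\text{on }\partial\Omega.
$$
Nonconstancy is checked as before. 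If $Z$ were a constant $K'$, the equation would force $(1+V_\lambda)K'=1$ in $\Omega$. This makes $V_\lambda$ constant, which is again incompatible with the Neumann condition $\partial_\nu V_\lambda=\bar v>0$. Lemma \ref{MP:lem} then yields $-Z<0$, which is \eqref{V:pr:2}.

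The only step that needs any care is finding the right linear equation for $1+\lambda W_\lambda$. After the substitution $\lambda W_\lambda=Z-1$, the zeroth-order coefficient stays $c\le 0$, and the inhomogeneity simplifies to the strictly positive term $\lambda e^{V_\lambda}$. Once that computation is done, both parts are direct applications of Lemma \ref{MP:lem}.
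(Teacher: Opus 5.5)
Your proof is correct and follows the paper's argument essentially verbatim. Both parts apply Lemma \ref{MP:lem} with $c=-\lambda(1+V_\lambda)e^{V_\lambda}$ and $b\equiv 0$: first to $\frac{d}{d\lambda}V_\lambda$, then to $-1-\lambda\frac{d}{d\lambda}V_\lambda$, whose equation has right-hand side $\lambda e^{V_\lambda}>0$, and nonconstancy is excluded because it would force $V_\lambda$ to be constant. Your explicit nonconstancy check for $1+\lambda\frac{d}{d\lambda}V_\lambda$ is a small improvement, since the paper invokes Lemma \ref{MP:lem} for that function without verifying this hypothesis.
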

\begin{proof}  First, we show that $\frac{d}{d\lambda}V_{\lambda}$ cannot be a constant function in $\bar{\Omega}.$ Suppose, conversely, that $\frac{d}{d\lambda}V_{\lambda}$ is constant, then we immediately gain 
$$
\frac{d}{d\lambda}V_{\lambda} = -\frac{V_\lambda}{\lambda(V_\lambda + 1)}  \quad \text{ in } \bar{\Omega}
$$ 
as a consequence of \eqref{V:lam:der} and the fact that $V_\lambda > 0$. This is absurd, because, recalling that $V_\lambda$ is nonconstant, the right hand side of this equality is certainly a nonconstant function. 

 Now, from positivity of $V_\lambda$ and \eqref{V:lam:der} it follows that $\frac{d}{d\lambda}V_{\lambda}$ is a  nonconstant solution of the following differential inequality
$$
\left\{\begin{array}{lr}
\Delta \left(\frac{d}{d\lambda}V_{\lambda}\right)-\lambda(1+V_\lambda)e^{V_\lambda}\left(\frac{d}{d\lambda}V_{\lambda}\right)>0,& x\in\Omega,\\
\partial_\nu \left(\frac{d}{d\lambda}V_{\lambda}\right)=0,& x\in\partial\Omega, 
\end{array}\right.
$$
and thus by Lemma \ref{MP:lem} we get that $\frac{d}{d\lambda}V_{\lambda}<0$ in $\bar{\Omega}.$ Now we proceed to the proof of the second inequality. Notice that $-1-\lambda \left(\frac{d}{d\lambda}V_{\lambda}\right)$ satisfies 
$$
\left\{\begin{array}{lr}
\Delta\left(-1-\lambda \left(\frac{d}{d\lambda}V_{\lambda}\right)\right)-\lambda(1+V_\lambda)e^{V_\lambda}\left(-1-\lambda \left(\frac{d}{d\lambda}V_{\lambda}\right)\right)=\lambda e^{V_\lambda}>0,&x\in\Omega\\
\partial_\nu \left(-1-\lambda \left(\frac{d}{d\lambda}V_{\lambda}\right)\right)=0,& x\in\partial\Omega,
\end{array}\right.
$$
and thus again by Lemma \ref{MP:lem} we see that $-1-\lambda \left(\frac{d}{d\lambda}V_{\lambda}\right)<0$ in $\bar{\Omega}.$

\end{proof}

\begin{remark}
From \eqref{V:pr:1} we get the following ordering for solutions of \eqref{loc:p}:
\begin{equation}\label{pr3}
0<\lambda_1<\lambda_2\Rightarrow V_{\lambda_1}> V_{\lambda_2},\quad \text{ in }\bar{\Omega}.
\end{equation} 
\end{remark}

Making use of the monotony properties established in the above lemma, we can now derive some crucial properties of $F$ regarding its behavior in the general case. 

\begin{lemma}\label{limF}
The function $F(\lambda) := \lambda \int_\Omega e^{V_\lambda}$, $\lambda \in (0,\infty)$, introduced in \eqref{nonl:cond} is continuously differentiable, increasing,
\begin{equation}\label{F:pr1}
\lim_{\lambda\rightarrow\infty}F(\lambda)=+\infty,
\end{equation}
and there exists a nonnegative number $L^\star$ such that 
\begin{equation}\label{F:pr2}
\lim_{\lambda\searrow 0}F(\lambda)=L^\star.
\end{equation}

\end{lemma}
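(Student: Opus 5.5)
Differentiability comes first. By Lemma \ref{V:dif:lem}, $\lambda\mapsto V_\lambda$ is $C^1$ from $(0,\infty)$ into $C^{2,\alpha}(\bar\Omega)\subset C^0(\bar\Omega)$. Since $\Omega$ is bounded, $\lambda\mapsto\int_\Omega e^{V_\lambda}$ is $C^1$, and we may differentiate under the integral:
\[
F'(\lambda)=\int_\Omega e^{V_\lambda}+\lambda\int_\Omega e^{V_\lambda}\frac{d}{d\lambda}V_\lambda=\int_\Omega e^{V_\lambda}\Bigl(1+\lambda\frac{d}{d\lambda}V_\lambda\Bigr).
\]
Continuity of $F'$ follows from the continuity of $\lambda\mapsto\frac{d}{d\lambda}V_\lambda$ in $C^0(\bar\Omega)$. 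By \eqref{V:pr:2} of Lemma \ref{V:lem}, the integrand is strictly positive on $\bar\Omega$, so $F'>0$ and $F$ is strictly increasing.

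For \eqref{F:pr1}, positivity of $V_\lambda$ from Lemma \ref{Exist:loc} gives $e^{V_\lambda}\ge 1$. Hence $F(\lambda)\ge\lambda|\Omega|\to\infty$ as $\lambda\to\infty$. Note that this bound does not even need Lemma \ref{lamlim}.

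For \eqref{F:pr2}, $F$ is increasing and positive, so the limit $L^\star:=\lim_{\lambda\searrow0}F(\lambda)=\inf_{\lambda>0}F(\lambda)$ exists and satisfies $L^\star\in[0,F(1)]$. In particular it is finite and nonnegative. This settles the claim as stated; the only subtle point is that identifying $L^\star$ (e.g.\ whether $L^\star=0$) is not needed here and is deferred to the radial analysis. Per Remark \ref{rem:compl}, it cannot be obtained from uniform bounds, since $\|V_\lambda\|_{L^\infty(\Omega)}\to\infty$ as $\lambda\searrow 0$.

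No step is a genuine obstacle. The only care needed is justifying the differentiation under the integral, which is routine given the $C^1$ dependence in $C^0(\bar\Omega)$ provided by Lemma \ref{V:dif:lem}.
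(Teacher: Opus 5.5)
Your proposal is correct and follows the paper's proof essentially step for step. It uses the $C^1$-dependence from Lemma \ref{V:dif:lem} together with \eqref{V:pr:2} to get $F'>0$, the bound $F(\lambda)\ge\lambda|\Omega|$ from $V_\lambda>0$ for \eqref{F:pr1}, and monotonicity plus $F\ge 0$ for the existence of $L^\star$, with your remark that $L^\star\le F(1)<\infty$ only making explicit a finiteness the paper leaves implicit.
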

\begin{proof} 
The function $F$ is continuously differentiable since $V_\lambda\in C^1((0,\infty);C^{2,\alpha}(\bar{\Omega}))$ by Lemma \ref{V:dif:lem} and we also have 
$$F'(\lambda)=\int_\Omega \left(1+\lambda \frac{d}{d\lambda}
V_\lambda\right)e^{V_\lambda}\,\,>0,$$
by \eqref{V:pr:2}.
Since $V_\lambda>0$ we further gain
$$
F(\lambda)=\lambda\int_\Omega e^{V_\lambda}> \lambda |\Omega|,
$$
which implies \eqref{F:pr1}. Due to the fact that $F$ is increasing and $F(\lambda)\geq 0$ we get \eqref{F:pr2}.

\end{proof}

Now we address the special case $\Omega=B_R(0)$, where the above result can be significantly improved. Then, by uniqueness, $V_\lambda$ is radially symmetric, that is $V_\lambda(x)=V_\lambda(|x|)=:V_\lambda(r),$ and $V_\lambda(r)$ satisfies the boundary value problem
\begin{equation}\label{loc:p:rad}
\left\{\begin{array}{lr}
\frac{1}{r^{n-1}}(r^{n-1}V_\lambda'(r))'=\lambda V_\lambda(r) e^{V_\lambda(r)},& r\in(0,R),\\
V_\lambda '(0)=0,\,\, V_\lambda '(R)=\bar{v}>0,&  
\end{array}\right.
\end{equation}
where $'$ denotes the radial derivative $\frac{d}{d r}$. The boundary condition $V_\lambda '(0)=0$ in \eqref{loc:p:rad} follows from smoothness of $V_\lambda$ as well as symmetry.

\begin{lemma}
Any solution $V_\lambda$ of \eqref{loc:p:rad} is strictly increasing, convex, and
\begin{equation}\label{V:diff:est}
0<V_\lambda'(r)< \bar{v}
\end{equation}
for all $r\in(0,R).$
\end{lemma}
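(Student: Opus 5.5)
Since $V_\lambda>0$ by Lemma \ref{Exist:loc}, the right-hand side of \eqref{loc:p:rad} is strictly positive on $[0,R]$. The plan is to integrate the radial equation once. Multiplying by $r^{n-1}$ and integrating from $0$ to $r$, using $V_\lambda'(0)=0$, gives
\begin{equation*}
V_\lambda'(r)=\frac{\lambda}{r^{n-1}}\int_0^r s^{n-1}V_\lambda(s)e^{V_\lambda(s)}\,ds>0\quad\text{for } r\in(0,R],
\end{equation*}
so $V_\lambda$ is strictly increasing. In particular $V_\lambda(s)e^{V_\lambda(s)}$ is increasing in $s$, because $x\mapsto xe^x$ is increasing on $[0,\infty)$.

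For convexity, expand the equation as $V_\lambda''=\lambda V_\lambda e^{V_\lambda}-\frac{n-1}{r}V_\lambda'$. Writing $g(s):=\lambda V_\lambda(s)e^{V_\lambda(s)}$, which is positive and increasing, the formula above gives
\begin{equation*}
\frac{n-1}{r}V_\lambda'(r)=\frac{n-1}{r^n}\int_0^r s^{n-1}g(s)\,ds\le \frac{n-1}{r^n}\,g(r)\,\frac{r^n}{n}=\frac{n-1}{n}g(r).
\end{equation*}
The inequality is strict, since $g$ is strictly increasing because $V_\lambda$ is. Hence $V_\lambda''(r)>g(r)/n>0$ on $(0,R)$. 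For $n=1$ we simply have $V_\lambda''=g>0$. At $r=0$, L'Hôpital (or smoothness) gives $V_\lambda''(0)=g(0)/n>0$, so $V_\lambda$ is strictly convex on $[0,R]$.

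Finally, $V_\lambda'$ is strictly increasing on $[0,R]$ because $V_\lambda''>0$. Therefore $0=V_\lambda'(0)<V_\lambda'(r)<V_\lambda'(R)=\bar v$ for $r\in(0,R)$, which is \eqref{V:diff:est}. The only delicate step is the convexity estimate near $r=0$, where the term $\frac{n-1}{r}V_\lambda'$ is singular. The averaging bound above handles it uniformly: it needs only the monotonicity of $g$ together with $V_\lambda\in C^2$, which comes from $V_\lambda\in C^{2,\alpha}(\bar\Omega)$.
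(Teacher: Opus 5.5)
Your proof is correct and follows the same outline as the paper: integrate once to get $V_\lambda'>0$, show $V_\lambda''>0$, then conclude $0=V_\lambda'(0)<V_\lambda'(r)<V_\lambda'(R)=\bar v$ because $V_\lambda'$ is increasing. The only difference is the convexity step: the paper substitutes $s=tr/R$ and differentiates the integral representation under the integral sign to get a manifestly positive integrand, whereas you bound the singular term $\frac{n-1}{r}V_\lambda'$ by an averaging argument using that $g=\lambda V_\lambda e^{V_\lambda}$ is increasing, which gives the explicit bound $V_\lambda''\ge g/n$ (so the strictness you invoke is not actually needed).
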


\begin{proof}
If we integrate once in \eqref{loc:p:rad} from $0$ to $r$ we obtain,
$$
V_\lambda'(r)=\frac{\lambda}{r^{n-1}}\int_0^rV_\lambda(s)e^{V_\lambda(s)}s^{n-1}ds,
$$
and thus $V_\lambda'(r)> 0$ for $r\in(0,R]$. Now,  substituting $t(s):=\frac{sR}{r}$,
\begin{align*}
V_\lambda'(r)
&=\frac{\lambda r}{R^n}
\int_0^R V_\lambda \left(\frac{tr}{R}\right) e^{V_\lambda(\frac{tr}{R})} t^{n-1}dt.
\end{align*}
Differentiating this expression with respect to $r$ we get,
\begin{align*}
V_\lambda''(r)=&
\frac{\lambda}{R^n}\int_0^R \left(V_\lambda \left(\frac{tr}{R}\right) +\frac{rt}{R} V_\lambda '\left(\frac{tr}{R}\right)\left(1+V_\lambda \left(\frac{tr}{R}\right)\right)\right) e^{V_\lambda(\frac{tr}{R})} t^{n-1}dt\\
&>0,
\end{align*}
for $r\in(0,R]$ and convexity of $V_\lambda$ follows. By convexity, we also find that $V_\lambda'(r)$ is increasing, and so \eqref{V:diff:est} follows. 

\end{proof}
Using the previous lemma and the mean value theorem, we can now significantly improve Lemma \ref{limF} in this case as promised.
\begin{lemma}\label{V:ball:lim}
If $\Omega=B_R(0)$, then
$$
\quad \lim_{\lambda \searrow 0} F(\lambda) = 0 \quad\text{ and thus }\quad L^\star = 0, 
$$
where $F$ is the function defined in \eqref{nonl:cond} and $L^\star$ is the number introduced in Lemma \ref{limF}.
\end{lemma}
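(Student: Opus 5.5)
The plan is to exploit the gradient bound \eqref{V:diff:est} together with the mean value theorem: on the ball, $V_\lambda$ oscillates by at most $\bar v R$, so it is essentially constant up to a bounded error. The flux identity then forces $\lambda e^{V_\lambda}$ to behave like $1/V_\lambda$, which tends to zero as $\lambda\searrow 0$.

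\emph{Step 1 (oscillation bound).} By the previous lemma and the mean value theorem,
\begin{equation*}
V_\lambda(0)\le V_\lambda(r)\le V_\lambda(0)+\bar v r\le V_\lambda(0)+\bar v R \quad\text{ for all } r\in[0,R].
\end{equation*}
Write $m_\lambda:=V_\lambda(0)=\min_{\bar\Omega}V_\lambda$. Then $\normA[L^\infty(\Omega)]{V_\lambda}\le m_\lambda+\bar v R$. By \eqref{pr1} of Lemma \ref{lamlim}, this gives $m_\lambda\to\infty$ as $\lambda\searrow 0$.

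\emph{Step 2 (flux identity).} Integrating \eqref{loc:p} over $\Omega$, as in the proof of Lemma \ref{lamlim}, gives
\begin{equation*}
\bar v|\partial\Omega|=\lambda\int_\Omega V_\lambda e^{V_\lambda}\ge \lambda m_\lambda\int_\Omega e^{V_\lambda}=m_\lambda F(\lambda),
\end{equation*}
because $V_\lambda\ge m_\lambda>0$ by positivity. Hence
\begin{equation*}
0\le F(\lambda)\le \frac{\bar v|\partial\Omega|}{m_\lambda}.
\end{equation*}

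\emph{Step 3 (conclusion).} Letting $\lambda\searrow 0$ and using $m_\lambda\to\infty$ yields $\lim_{\lambda\searrow0}F(\lambda)=0$, so $L^\star=0$ by Lemma \ref{limF}.

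The only real obstacle is Step 1, which converts the blow-up of the sup norm into blow-up of the minimum. This needs a uniform-in-$\lambda$ bound on the oscillation of $V_\lambda$. In the radial case that bound is exactly the gradient estimate $V_\lambda'<\bar v$, which comes from convexity. In a general domain no such bound is available, which is why only $L^\star\ge 0$ can be asserted there.
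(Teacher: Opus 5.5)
Your proposal is correct and follows the paper's proof: you use the gradient bound $0<V_\lambda'<\bar v$ and the mean value theorem to turn the blow-up \eqref{pr1} into blow-up of $V_\lambda(0)=\min_{\bar\Omega}V_\lambda$, and then the integrated equation gives $F(\lambda)\le \bar v|\partial\Omega|/V_\lambda(0)$. The only difference is cosmetic: the paper writes this bound with the lower bound $V_\lambda(R)-\bar v R$ in place of $V_\lambda(0)$, valid for small $\lambda$.
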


\begin{proof}
Thanks to \eqref{V:diff:est} and the mean value theorem we have
\begin{equation}\label{V:inf:lb}
V_\lambda(R)-\bar{v}R\leq V_\lambda(0),
\end{equation}
and from \eqref{pr1} we know that 
\begin{equation}\label{V:rad:lim}
\lim_{\lambda\searrow 0}V_\lambda(R)=+\infty,
\end{equation}
 so the left hand side of \eqref{V:inf:lb} is evidently positive if $\lambda$ is small.
By integration of \eqref{loc:p} we get
$$
\bar{v}|\partial\Omega|=\lambda\int_\Omega V_\lambda e^{V_\lambda}\geq  F(\lambda) \inf_{\Omega} V_\lambda = F(\lambda) V_\lambda(0),
$$
and using \eqref{V:inf:lb} we obtain,
\begin{equation}\label{F:up:bd}
F(\lambda)\leq \frac{\bar{v}|\partial\Omega|}{V_\lambda(R)-\bar{v}R}
\end{equation}
for small $\lambda$. Thus, using \eqref{V:rad:lim} once again we see that $\lim_{\lambda \searrow 0} F(\lambda) = 0$ and therefore $L^\star=0$ in Lemma \ref{limF}.
\end{proof}

\subsection{Proof of Theorem \ref{MainR}}

Putting together the central results of the previous two sections, we can now give a fairly concise proof of our overall main result.

\begin{proof}[Proof of Theorem \ref{MainR}]
As we have shown in Lemma \ref{limF}, the function $F:(0,+\infty)\rightarrow(L^\star,+\infty)$ is well defined, continuous and strictly increasing, thus for each $M\in(L^\star,+\infty)$ the equation
$$
M=F(\lambda)
$$
has a unique solution $\lambda\in(0,+\infty)$, therefore the problem in \eqref{nonl:p} has a unique solution $V\in C^{2,\alpha}(\bar{\Omega})$ for any $M>L^\star$. We also see that actually \eqref{std} has a unique solution $(U,V)\in (C^{2,\alpha}(\bar{\Omega}))^2$, which has the form \eqref{eqiv:rel}. If we additionally assume that $\Omega=B_R(0),$ then the solution is necessarily radially symmetric and $L^\star=0$ by Lemma \ref{V:ball:lim}. Both components of the solution are increasing and convex. 
\end{proof}

\section*{Acknowledgments}

Piotr Knosalla was supported by the NCN grant Miniatura DEC-2024/08/X/ST1/01497. Frederic Heihoff was supported by Deutsche Forschungsgemeinschaft in the context of the project \emph{Fine structures in interpolation inequalities and application to parabolic problems}, project number 462888149.

\end{document}